\newtheorem{lemma}{Lemma}[section]
\newtheorem{theorem}[lemma]{Theorem}
\newtheorem{corollary}[lemma]{Corollary}
\newtheorem{proposition}[lemma]{Proposition}
\newtheorem{example}[lemma]{Example}
\newtheorem{problem}[lemma]{Problem}
\theoremstyle{definition}
\newtheorem{definition}[lemma]{Definition}
\newtheorem{remark}[lemma]{Remark}
\newcommand{\w}{\omega}
\newcommand{\F}{\mathcal F}
\newcommand{\IN}{\mathbb N}
\newcommand{\E}{\mathcal E}
\newcommand{\C}{\mathcal C}
\newcommand{\M}{\mathcal M}
\newcommand{\Ra}{\Rightarrow}
\begin{document}

\title{Minimal covers of hypergraphs}

\author{Taras Banakh}
\address{Ivan Franko National University of Lviv,
Lviv (Ukraine) and\newline Jan Kochanowski University in Kielce (Poland).}
\email{\tt t.o.banakh@gmail.com}
\author{Dominic van der Zypen}
\address{Swiss Armed Forces Command Support, CH-3003 Bern,
Switzerland}
\email{\tt dominic.zypen@gmail.com}

\subjclass[2010]{05C65, 05C70, 54D20}

\keywords{Hypergraph, cover, minimal cover, minicompact space}

\begin{abstract} For a hypergraph $H=(V,\E)$, a subfamily $\C\subseteq \E$ is called a cover of the hypergraph if $\bigcup\C=\bigcup\E$. A cover $\C$ is called minimal if each cover $\mathcal D\subseteq\C$ of the hypergraph $H$ coincides with $\C$. We prove that for a hypergraph $H$ the following conditions are equivalent: (i) each countable subhypergraph of $H$ has a minimal cover; (ii) each non-empty subhypergraph of $H$ has a maximal edge; (iii) $H$ contains no isomorphic copy of the hypergraph $(\w,\w)$. This characterization implies that a countable hypergraph $(V,\E)$ has a minimal cover if every infinite set $I\subseteq V$ contains a finite subset $F\subseteq I$ such that the family of edges $\E_F:=\{E\in\E:F\subseteq E\}$ is finite. Also we prove that a hypergraph $(V,\E)$ has a minimal cover if $\sup\{|E|:E\in\E\}<\w$ or for every $v\in V$ the family $\E_v:=\{E\in\E:v\in E\}$ is finite.
\end{abstract}

\maketitle
\parskip=1pt
\section{Introduction}
Hypergraphs are very simple mathematical structures, but they are 
surprisingly useful for modeling various concepts in the real
world. 

A {\em hypergraph} is a pair $\Gamma=(V,\E)$ consisting of a set $V$ of {\em vertices} and a collection $\E$ of subsets of $V$, called the {\em edges} of $\Gamma$. In this paper we shall be interested in hypergraphs $(V,\E)$ whose edges cover $V$ in the sense that $V=\bigcup \E:=\bigcup_{E\in\E}E$. In this case the hypergraph can be identified with the family $\E$ of its edges. Moreover, any family $\E$ of sets can be identified with the hypergraph $(\bigcup \E,\E)$. 
To shorten notations, by a {\em hypergraph} we shall understand any family $\E$ of sets. Elements of $\E$ and $\bigcup \E$ will be called the {\em edges} and the {\em vertices} of the hypergraph $\E$, respectively. 
A hypergraph $\E$ is {\em countable} if the sets $\E$ and $\bigcup\E$ both are countable.

Two hypergraphs $\E$ and $\E'$ are called {\em isomorphic} if there exists a bijective map $f:\bigcup\E\to\bigcup\E'$ such that $\mathcal E'=\{f[E]:E\in\E\}$.

To distinguish between (pre)images of points and sets, for a function $f:X\to Y$ between sets $X,Y$ and subsets $A\subseteq X$ and $B\subseteq Y$, we put $$f[A]:=\{f(a):a\in A\}\mbox{ and }f^{-1}[B]:=\{x\in X:f(x)\in B\}.$$

For a hypergraph $\E$ and a set $A$ put $\E{\restriction}A:=\{E\cap A:E\in\E\}$. A hypergraph $\mathcal S$ is called a {\em subhypergraph} of a hypergraph $\E$ if $\mathcal S=\mathcal F{\restriction}A$ for some sets $\mathcal F\subseteq \E$ and $A\subseteq\bigcup\mathcal F$.

An edge $E\in\E$ of a hypergraph $\E$ is {\em maximal} if each edge $E'\in\E$ with $E\subseteq E'$ is equal to $E$.

A subset $\C\subseteq \E$ of a hypergraph $\E$ is called a {\em cover} of the hypergraph if $\bigcup \C=\bigcup \E$. A cover $\C$ of a hypergraph $\E$ is {\em minimal} if each cover $\mathcal D\subseteq \C$ of $\E$ coincides with $\C$. Observe that a cover $\C\subseteq \E$ of a hypergraph $\E$ is minimal if and only if  every edge $C\in \C$ contains a vertex $v\in C$ such that the set $\C_v:=\{E\in \C:v\in E\}$ coincides with the singleton $\{C\}$.

It is clear that a hypergraph $\E$ has a minimal cover if the set $\E$ is finite.
The simplest example of a hypergraph without minimal covers is the hypergraph $\w$ whose edges are finite ordinals $n$ identified with the sets $\{0,\dots,n-1\}$ of smaller ordinals. Observe that $\bigcup\w=\w$, which means that in the hypergraph  $\w$ the sets of vertices and edges coincide. 

The main result of this paper is the following characterization.

\begin{theorem}\label{t:main} For any hypergraph $\E$ the following conditions are equivalent:
\begin{enumerate}
\item each countable subhypergraph of $\E$ has a minimal cover;
\item each non-empty subhypergraph of $\E$ has a maximal edge;
\item no subhypergraph of $\E$ is isomorphic to $\w$.
\end{enumerate}
\end{theorem}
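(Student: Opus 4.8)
\smallskip
\noindent\emph{Proof strategy.} I would prove the cyclic implications $(1)\Ra(3)\Ra(2)\Ra(1)$, the first two being quick and the last being the technical core.

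For $(1)\Ra(3)$, argue by contraposition: a subhypergraph of $\E$ isomorphic to $\w$ is a countable subhypergraph, and since possessing a minimal cover is invariant under isomorphisms of hypergraphs while $\w$ has no minimal cover (as noted in the Introduction), it violates $(1)$. For $(3)\Ra(2)$, also by contraposition: if a non-empty subhypergraph $\mathcal S=\F\restrict A$ of $\E$ (with $\F\subseteq\E$) has no maximal edge, then by dependent choice there is a strictly increasing chain $E_0\subsetneq E_1\subsetneq\cdots$ of edges of $\mathcal S$; choosing $a_i\in E_{i+1}\setminus E_i$ yields pairwise distinct vertices, and writing $E_n=F_n\cap A$ with $F_n\in\F$ one checks that $F_n\cap\{a_i:i\in\w\}=\{a_i:i<n\}$, so $\{F_n:n\in\w\}\restrict\{a_i:i\in\w\}$ is a subhypergraph of $\E$ whose edges are exactly the finite initial segments of $(a_i)$; via $a_i\mapsto i$ it is isomorphic to $\w$, so $(3)$ fails.

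The substantial implication is $(2)\Ra(1)$. First note that being a subhypergraph is a transitive relation, so $(2)$ is inherited by every subhypergraph of $\E$; hence it suffices to give a minimal cover of a countable hypergraph $\mathcal S$ satisfying $(2)$. Enumerate $\bigcup\mathcal S=\{v_n:n\in\w\}$ (the finite case being trivial) and build recursively an increasing sequence of finite families $\C_0\subseteq\C_1\subseteq\cdots$ in $\mathcal S$ together with compatible injections $p_n\colon\C_n\to\bigcup\mathcal S$ such that (i) $p_n(C)\in C\setminus\bigcup(\C_n\setminus\{C\})$ for each $C\in\C_n$ (so $\C_n$ is a minimal cover of $\bigcup\C_n$); (ii) $v_n\in\bigcup\C_{n+1}$; (iii) each edge of $\C_{n+1}\setminus\C_n$ is disjoint from $p_n[\C_n]$; and, crucially, (iv) every vertex outside $\bigcup\C_n$ lies in some edge of $\mathcal S$ disjoint from $p_n[\C_n]$. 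Then (ii)--(iii) guarantee that no private vertex of an edge of $\bigcup_n\C_n$ is ever re-covered, so $\C:=\bigcup_n\C_n$ is a minimal cover of $\mathcal S$ by the criterion recalled in the Introduction.

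The recursion step is where $(2)$ is used in full, and where I expect the main obstacle. If $v_n$ is already covered, do nothing; otherwise put $P:=p_n[\C_n]$, pass to the subhypergraph $\mathcal S':=\{E\in\mathcal S:E\cap P=\emptyset\}$ (which satisfies $(2)$ and, by (iv), covers every uncovered vertex), and consider on $\bigcup\mathcal S'$ the domination preorder ``$w\rightsquigarrow p$ iff every edge of $\mathcal S'$ containing $w$ contains $p$''. The key lemma is that $\rightsquigarrow$ is well-founded: a strictly descending chain $q_0\succ q_1\succ\cdots$ would force every $\mathcal S'$-edge through $q_j$ to contain $q_0,\dots,q_{j-1}$, making the traces on $\{q_j:j\in\w\}$ of suitably chosen edges exactly the finite initial segments of $(q_j)$, and then (after discarding $q_0$) one extracts a subhypergraph of $\E$ isomorphic to $\w$ --- impossible under $(2)$. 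Granting this, descend from $v_n$ along $\rightsquigarrow$ inside the uncovered vertices to a vertex $p$ that is $\rightsquigarrow$-minimal among uncovered vertices and satisfies $p\rightsquigarrow v_n$; any edge $E\in\mathcal S'$ through $p$ then also contains $v_n$, so adjoin $E$ to $\C_n$ with private vertex $p$. The minimality of $p$ forces $\{u\ \text{uncovered}:u\rightsquigarrow p\}\subseteq E$, which is exactly what keeps (iv) alive for $\C_{n+1}$; properties (i)--(iii) are then immediate. The delicate point is precisely this coordinated choice of the new edge together with its private vertex through the well-founded relation $\rightsquigarrow$, and the copy-of-$\w$ extraction showing that $\rightsquigarrow$ is well-founded.
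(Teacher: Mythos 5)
Your argument is correct, but it takes a genuinely different route from the paper's. The paper proves the contrapositive of $(3)\Ra(1)$: it introduces the auxiliary notion of a hypergraph having ``minimal covers at a vertex $v$'' (a condition on the families $\E_{v-B}$ of edges through $v$ missing a finite set $B$), shows in Lemma~\ref{l:local} that a countable hypergraph with minimal covers at every vertex has a minimal cover, and then in Lemma~\ref{l3} iterates the failure of this local condition to produce edges $E_n$ whose traces on a suitable vertex set are exactly the initial segments of $\w$. You instead prove $(2)\Ra(1)$ positively, by a greedy construction adding one edge per step with a designated private vertex, the hypothesis entering only through the well-foundedness of your domination preorder $\rightsquigarrow$; an infinite strict descent yields the copy of $\w$ by essentially the same trace computation ($G_j\cap\{q_i:i\ge 1\}=\{q_1,\dots,q_{j-1}\}$) that the paper performs with its $E_n$. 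The two arguments exploit the same underlying phenomenon --- the set of vertices all of whose admissible edges pass through a given vertex: the paper must, at stage $n$, cover the whole cluster $V_n\setminus\bigcup\E_{-B_{\le n}}$ of vertices dominated by $v_n$ at once, which is precisely why it needs the recursive ``local minimal cover'' hypothesis, whereas you descend to a domination-minimal uncovered vertex and add a single edge, so your invariant (iv) replaces that machinery and is easier to state and verify. What your version buys is a shorter, more self-contained induction with one clean key lemma (well-foundedness of $\rightsquigarrow$); what the paper's version buys is that the witnessing copy of $\w$ is produced directly from the assumed failure of $(1)$. The details you flag as delicate do check out: transitivity of the subhypergraph relation, the preservation of (iv) via the minimality of $p$ among uncovered vertices (if $u\rightsquigarrow p$ and $u$ is uncovered then $p\rightsquigarrow u$, hence $u\in E$), and the need to discard $q_0$ so that the empty edge of $\w$ is realized.
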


The implications $(1)\Ra(3)\Leftrightarrow(2)$ of this theorem are trivial and $(3)\Ra(1)$ will be proved in Lemma~\ref{l3}. In Section~\ref{s:a} we shall analyze some implications of Theorem~\ref{t:main} and pose related open problems.  It should be mentioned that problems related to minimal covers of finite hypergraphs have been intensively studied in Graph Theory, see \cite{AB}, \cite{Lehel}, \cite{Okun}, \cite{TD}.

\section{Key Lemmas}
 
 Theorem~\ref{t:main} will be derived from Lemma~\ref{l3}, proved in this section. In the proof of Lemma~\ref{l3} we shall one two other lemmas.

\begin{lemma}\label{l:delete} If a hypergraph $\E$ on the set $V:=\bigcup\E$ has no minimal covers, then for any edge $F\in\mathcal E$ the hypergraph $\E{\restriction}V\setminus F$ has no minimal covers, too.
\end{lemma}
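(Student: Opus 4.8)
The plan is to prove the contrapositive: assuming the hypergraph $\E{\restriction}(V\setminus F)$ (whose vertex set is $(\bigcup\E)\setminus F=V\setminus F$) has a minimal cover, I will build a minimal cover of $\E$, contradicting the hypothesis. So fix a minimal cover $\mathcal D'\subseteq\E{\restriction}(V\setminus F)$; then $\bigcup\mathcal D'=V\setminus F$, and by the criterion recorded in the introduction every $D\in\mathcal D'$ contains a vertex $w_D\in D$ lying in no other member of $\mathcal D'$. I also note at the outset that $\emptyset\notin\mathcal D'$, since otherwise $\mathcal D'\setminus\{\emptyset\}$ would be a strictly smaller cover.

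The next step is to lift $\mathcal D'$ back into $\E$. Every member of $\E{\restriction}(V\setminus F)$ equals $E\setminus F$ for some $E\in\E$, so for each $D\in\mathcal D'$ I choose an edge $e(D)\in\E$ with $e(D)\setminus F=D$; since $D\ne\emptyset$ this forces $e(D)\ne F$. Put $\mathcal D_0:=\{e(D):D\in\mathcal D'\}\subseteq\E$. From $e(D)\setminus F=D$ we get $\bigcup\mathcal D_0\supseteq\bigcup_{D\in\mathcal D'}D=V\setminus F$, hence $V\setminus F\subseteq\bigcup\mathcal D_0\subseteq V$. I then set $\mathcal D:=\mathcal D_0$ if $\bigcup\mathcal D_0=V$, and $\mathcal D:=\mathcal D_0\cup\{F\}$ otherwise; in either case $\bigcup\mathcal D=V$, so $\mathcal D$ is a cover of $\E$.

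It remains to verify, via the same criterion, that $\mathcal D$ is minimal. For an edge $e(D)\in\mathcal D_0$ the vertex $w_D$ works: it lies in $D=e(D)\setminus F$, so $w_D\in e(D)$ and $w_D\notin F$; and for $D''\ne D$ in $\mathcal D'$ we have $w_D\notin D''=e(D'')\setminus F$, which together with $w_D\notin F$ yields $w_D\notin e(D'')$ --- so $e(D)$ is the unique edge of $\mathcal D$ through $w_D$. If the extra edge $F$ has been adjoined, then, since $V\setminus F\subseteq\bigcup\mathcal D_0\subsetneq V$, the set $F\setminus\bigcup\mathcal D_0$ is a non-empty subset of $F$, and any of its points lies in no $e(D)$, hence is a witnessing vertex for $F$. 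Thus $\mathcal D$ is a minimal cover of $\E$, the desired contradiction.

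I expect the only genuinely delicate point to be the case distinction in the definition of $\mathcal D$: one must not adjoin $F$ unconditionally, since if $F$ were already covered by $\mathcal D_0$ then $F$ would have no vertex of its own and minimality would fail; conversely $\mathcal D_0$ by itself need not cover $V$. Everything else is routine bookkeeping with the identity $e(D)\setminus F=D$ together with the fact that the distinguished vertices $w_D$ automatically miss $F$.
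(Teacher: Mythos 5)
Your proof is correct and follows essentially the same route as the paper's: lift a minimal cover of $\E{\restriction}(V\setminus F)$ to a subfamily of $\E$, then adjoin $F$ exactly when that subfamily fails to cover $V$. The paper's version is just terser, phrasing minimality via minimal subfamilies rather than via the witnessing-vertex criterion you use; the extra care you take (excluding $\emptyset$, checking $e(D)\ne F$, the case distinction) is exactly the right bookkeeping.
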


\begin{proof} Assuming that the hypergraph $\E{\restriction}V\setminus F$ has a minimal cover, we can find a minimal subfamily $\M\subseteq \E$ such that $V\setminus F=\bigcup_{E\in\M}E\setminus F$. If $\bigcup\M=V$, then $\M$ is a minimal cover of $\E$.
If $\bigcup\M\ne V$, then $\{F\}\cup\M$ is a minimal cover of $\bigcup\E$.
\end{proof}
 
To formulate our next lemma we need to introduce some notation. For a hypergraph $\E$ and subsets $A,B\subseteq \bigcup \E$ let $$\E_A:=\{E\in\E:A\subseteq E\},\;\E_{-B}:=\{E\in\E:E\cap B=\emptyset\}\mbox{ and }\E_{A-B}:=\E_A\cap\E_{-B}.$$
If $A=\{v\}$ for some vertex $v\in V$, then we shall write $\E_v$ and $\E_{v-B}$ instead of $\E_{\{v\}}$ and $\E_{\{v\}-B}$, respectively.

\begin{definition} A hypergraph $\E$ is defined to have {\em minimal covers at a vertex} $v\in\bigcup\E$ if for any finite set $B\subseteq \bigcup\E$ and any subset $A\subseteq \bigcup\E_{v-B}$ the hypergraph $\E_{v-B}{\restriction}A\setminus\bigcup\E_{-(B\cup\{v\})}$ has a minimal cover.  
\end{definition}

\begin{lemma}\label{l:local} A countable hypergraph $\E$ has a minimal cover if  $\E$ has minimal covers at each vertex $v\in \bigcup\E$.
\end{lemma}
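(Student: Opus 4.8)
The plan is to build a minimal cover of the countable hypergraph $\E$ by a greedy transfinite (countable) recursion, where at each step we permanently "resolve" one more vertex, using the hypothesis of having minimal covers at that vertex to carve off a small, well-controlled piece of the cover. Fix an enumeration $\bigcup\E=\{v_n:n\in\w\}$. I would construct, by recursion on $n$, a decreasing sequence of "remaining" vertex sets $V=V_0\supseteq V_1\supseteq\cdots$, an increasing sequence of finite sets $B_0\subseteq B_1\subseteq\cdots$ of "already forbidden" vertices, and a growing subfamily $\C_0\subseteq\C_1\subseteq\cdots$ of $\E$, maintaining the invariants that $\C_n$ is a minimal cover of $\E{\restriction}(\bigcup\E\setminus V_n)$ in the induced sense (i.e. $\bigcup\{E\setminus V_n:E\in\C_n\}=\bigcup\E\setminus V_n$ and this is irredundant), that $B_n\subseteq\bigcup\C_n$, and that $v_0,\dots,v_{n-1}\in B_n\cup V_n^{c}$ — in other words every vertex with small index has either been covered by an edge already put into $\C$ or has been permanently excluded.

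At stage $n$, pick the least-indexed vertex $v$ not yet handled. If $v$ is already in $\bigcup\C_n$, set $\C_{n+1}=\C_n$, $B_{n+1}=B_n\cup\{v\}$ and $V_{n+1}=V_n$ and move on. Otherwise $v\in V_n$, and I want to add to $\C_n$ enough edges through $v$, missing $B_n$, to cover $v$ — but minimally, and without disturbing the part of $V_n$ that later stages will need to reach. This is exactly where the hypothesis enters: applying the definition of "minimal covers at $v$" with the finite set $B:=B_n$ and with $A:=V_n\cap\bigcup\E_{v-B_n}$, we get a minimal cover $\M$ of the hypergraph $\E_{v-B_n}{\restriction}A\setminus\bigcup\E_{-(B_n\cup\{v\})}$. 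The role of subtracting $\bigcup\E_{-(B_n\cup\{v\})}$ is the crucial subtlety: vertices lying in some edge that avoids $B_n\cup\{v\}$ can safely be left for a future stage (a future edge avoiding the then-current forbidden set will still be available to cover them), whereas vertices in $A$ not covered by such an edge \emph{must} be covered now by an edge through $v$ avoiding $B_n$. So I set $\C_{n+1}:=\C_n\cup\M$, let $V_{n+1}:=V_n\setminus\bigl(\{v\}\cup\bigcup_{E\in\M}E\bigr)$, and $B_{n+1}:=B_n\cup\{v\}$; one checks $v\in\bigcup\C_{n+1}$ and re-verifies the invariants — minimality of $\C_{n+1}$ follows because each new edge $E\in\M$ contains a vertex privately witnessing it inside $A\setminus\bigcup\E_{-(B_n\cup\{v\})}$ and avoids $B_n\supseteq$ the private witnesses of the old edges, while the old edges still cover their old part.

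Finally set $\C:=\bigcup_{n\in\w}\C_n$. The main claim is that $\C$ is a minimal cover of $\E$. That it is a cover: given any vertex $v_k$, at stage $k$ (at the latest) it was either put into some edge of $\C_{k+1}\subseteq\C$, or excluded — and I must argue an excluded vertex is still covered, which is where I use that excluded vertices always get excluded because they lie in $\bigcup\E_{-(B_m\cup\{v\})}$ for the relevant $m$, so are handled by a later edge; this bookkeeping needs care to close. That it is minimal: each $C\in\C$ entered at some finite stage as a member of the minimal cover $\M$ produced there, hence has a private witness vertex $w\in C$ with respect to $\M$; since $C$ avoids all the (finitely many, then later all the) forbidden vertices $B_m$ used at subsequent stages, no later edge passes through $w$, so $\C_w=\{C\}$. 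The delicate, and I expect hardest, point is exactly this last coherence argument: ensuring that the "private witness" of an edge added at stage $n$ is never swallowed by an edge added at a stage $m>n$. The design choice that makes it work is forcing every edge admitted at stage $m$ to avoid $B_m$, together with the fact that the private witnesses from stage $n$ can be arranged to lie in $B_{n+1}\subseteq B_m$; verifying that Lemma~\ref{l:delete}-style stability and the exclusion mechanism interact correctly is the real content of the proof.
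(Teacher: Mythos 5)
Your overall strategy is the same as the paper's: process the vertices one at a time in an enumeration, and at each genuinely new vertex $v$ invoke the local hypothesis with $B$ equal to the set of previously processed vertices, covering only those nearby vertices that are \emph{not} in $\bigcup\E_{-(B\cup\{v\})}$ and deferring the rest; minimality then comes from the fact that every edge through a private witness of a stage-$n$ edge must meet $B\cup\{v\}$, while all later edges avoid that set. This is exactly the paper's construction, and your identification of the role of subtracting $\bigcup\E_{-(B\cup\{v\})}$ is the right key idea. (One phrasing slip: the private witnesses do not ``lie in $B_{n+1}$''; the correct statement, which you in fact give earlier, is that they lie outside $\bigcup\E_{-(B_n\cup\{v\})}$, so every edge containing them meets $B_{n+1}$, whereas later edges avoid $B_{n+1}$.)

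There is, however, one concrete step that fails as written: at a ``skip'' stage, where the current vertex $v$ is already covered, you still put $v$ into the forbidden set ($B_{n+1}=B_n\cup\{v\}$). This destroys the invariant $V_n\subseteq\bigcup\E_{-B_n}$ on which everything else rests --- both the fact that the next uncovered vertex $v$ satisfies $v\in\bigcup\E_{v-B_n}$ (so that it actually gets covered at its own stage) and the fact that deferred vertices remain reachable. Concretely, take $V=\{a,b,u\}$ with $\E=\{\{a,b\},\{b,u\}\}$ and the enumeration $a,b,u$. Stage $0$ covers $a$ and $b$ by the single edge $\{a,b\}$ and sets $B_1=\{a\}$; stage $1$ skips $b$ but sets $B_2=\{a,b\}$; at stage $2$ the family $\E_{u-B_2}$ is empty (the only edge through $u$ contains $b$), so $u$ is never covered and the construction outputs $\{\{a,b\}\}$, which is not a cover. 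The fix is to put into $B$ only those vertices at which the local hypothesis is actually applied, i.e.\ the uncovered ones --- this is what the paper does by taking $B_{<n}=\{\min V_k\}_{k<n}$ --- and then to verify the invariant $V_n\subseteq\bigcup\E_{-B_{<n}}$ by the argument you gesture at (any edge through a deferred vertex that avoids $B_{<n}$ either misses $v_n$, in which case the vertex lies in $\bigcup\E_{-B_{\le n}}$, or contains $v_n$, in which case the vertex would already have been covered). That verification is the ``bookkeeping that needs care to close'' and should be written out.
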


\begin{proof} Fix a well-order $\le$ of the countable set $V:=\bigcup\E$ such that for any vertex $v\in V$ the set $\{u\in V:u<v\}$ has finite cardinality $<|V|$. For a non-empty subset $S\subseteq V$, we denote by $\min S$ the smallest element of $S$ with respect to the well-order $\le$. 

Let $V_0:=V$ and $\M_0=\emptyset$.
By induction, for every $n\in\w$ we shall choose a subfamily $\M_n\subseteq \E$ such that the following conditions are satisfied:
\begin{enumerate}
\item[$(1_n)$] if the set $V_n:=V\setminus\bigcup_{k\le n}\bigcup \M_{k}$ is empty, then $\M_{n+1}=\emptyset$;
\item[$(2_n)$] if $V_n\ne\emptyset$, then 
\begin{itemize}
\item $V_n\subseteq \bigcup\E_{-B_{<n}}$ where $B_{<n}:=\{\min V_k\}_{k<n}$;
\item $\M_{n+1}\subseteq\E_{v_n-B_{<n}}$ where $v_n:=\min V_n$;
\item $\M_{n+1}{\restriction}V_n\setminus \bigcup\E_{-B_{\le n}}$ is a minimal cover of the hypergraph
$\E_{v_n -B_{<n}}{\restriction}V_n\setminus \bigcup\E_{-B_{\le n}}$ where $B_{\le n}:=\{v_k\}_{k\le n}$. 
\end{itemize}
\end{enumerate}

Assume that for some $n\in\w$ we have chosen families $\M_0,\dots,\M_{n}$ such that the conditions $(1_{k}),(2_k)$ are satisfied for all $k<n$.
Now we choose a family $\M_{n+1}$ satisfying the conditions $(1_n)$ and $(2_n)$. If the set $V_n:=V\setminus \bigcup_{k\le n}\bigcup \M_{k}$ is empty, then put $\M_{n+1}=\emptyset$. 

Now assume that the set $V_n$ is not empty and let $v_n:=\min V_n$ be the smallest element of $V_n$ with respect to the well-order $\le$ on $V$. Since the sequence $V_0\supseteq\dots\supseteq V_n$ consists of non-empty sets, the sets $B_{<n}=\{\min V_k\}_{k<n}$  and $B_{\le n}=\{\min V_k\}_{k\le n}$ are well-defined.  

First we check that $V_n\subseteq\bigcup\E_{-B_{<n}}$. Observe that 
$V_n=V_{n-1}\setminus\bigcup\M_n$. Then for $k=n-1$ the condition $(2_k)$ ensures that any vertex $x\in V_n$ belongs to the set $V_{k}\subseteq \bigcup\E_{B_{<k}}$ but not to the set $\bigcup\M_n\supseteq (\bigcup\E_{v_k-B_{<k}})\cap V_k\setminus\bigcup \E_{-B_{\le k}}$. Then $x\notin  (\bigcup\E_{v_k-B_{<k}})\setminus\bigcup \E_{-B_{<n}}$. Assuming that $x\notin \bigcup\E_{-B_{<n}}$, we conclude that $x\notin  \bigcup\E_{v_k-B_{<k}}$. Since $x\in V_n\subseteq V_k\subseteq \bigcup\E_{-B_{<k}}$, we can find an edge $E\in\E_{-B_{<k}}$ containing $x$. Since  $x\notin  \bigcup\E_{v_k-B_{<k}}$, the edge $E$ does not contain the point $v_k=v_{n-1}$. Then $x\in E\in \E_{-B_{<n}}$, which contraducts our assumption $x\notin \bigcup\E_{-B_{<n}}$.
This contradiction completes the proof of the inclusion   $V_n\subseteq\bigcup\E_{-B_{<n}}$.

Since $\E$ has minimal covers at the vertex $v_n$, the hypergraph $\E_{v_n-B_{<n}}{\restriction}V_n\setminus\bigcup\E_{-B_{\le n}}$ has a minimal cover $\M'_{n+1}$.
For every edge $E\in \M'_{n+1}$ choose an edge $\tilde E\in \E_{v_n-B_{<n}}$ such that $E=\tilde E\cap V_n\setminus \bigcup \E_{-B_{\le n}}$ and put $\M_{n+1}:=\{\tilde E:E\in \M'_{n+1}\}$. This completes the inductive step.

After completing the inductive construction, consider the family $\M:=\bigcup_{n\in\w}\M_n\subseteq\E$. We claim that $\M$  is a minimal cover of the hypergraph $\E$. First we show that $\bigcup\M=V$. Assuming that some vertex $v\in V$ does not belong to  $\bigcup\M$, we conclude that $v\in\bigcap_{n\in\w}V_n$ and hence the set ${\downarrow}v\supseteq\{v_n\}_{n\in\w}$ is infinite, which contradicts the choice of the well-order $\le$ (here we should also observe that for every $n\in\w$ we have $v_n\in V_n\subseteq \bigcup\E_{-B_{<n}}$, which implies that $v_n\notin B_{<n}$ and hence the points $v_n$, $n\in\w$, are pairwise distinct).

Next, we show that the cover $\M$ of $\E$ is minimal. Given any edge $E\in\M$, find the smallest number $n\in\w$ such that $E\in \M_{n+1}$. By the inductive condition $(2_{n})$, the family $\M_{n+1}{\restriction}V_n\setminus\bigcup\E_{-B_{\le n}}$ is a minimal cover of the hypergraph $\E_{v_n-B_{<n}}{\restriction}V_n\setminus\bigcup\E_{-B_{\le n}}$. Consequently, the edge $E$ contains a point $x\in (\bigcup\E_{v_n-B_{<n}})\cap V_n\setminus \bigcup\E_{-B_{\le n}}$, which is not contained in any edge $E'\in\M_{n+1}\setminus\{E\}$. 

 The definition $(1_n)$ of the set $V_n\ni x$ implies that $x\notin E'$ for any edge $E'\in\bigcup_{k\le n}\M_k$. On the other hand, the inclusion $\M_{m+1}\subseteq \E_{-B_{<m}}\subseteq \E_{-B_{\le n}}$ holding for every $m>n$ ensures that the point $x\notin \bigcup\E_{-B_{\le n}}$ does not belong to any edge $E'\in \bigcup_{m>n}\M_{m+1}$. This completes the proof of the minimality of the cover $\M$ and also the proof of the lemma.
\end{proof}

The following Lemma proves the (only non-trivial) implication $(3)\Ra(1)$ of Theorem~\ref{t:main}.

\begin{lemma}\label{l3} If a countable hypergraph $\E$ has no minimal covers, then  $\E$  contains a subhypergraph isomorphic to $\w$.
\end{lemma}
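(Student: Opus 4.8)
The plan is to prove the contrapositive in the form actually stated: assuming $\E$ is a countable hypergraph with no minimal cover, produce a subhypergraph isomorphic to $\w$. By Lemma~\ref{l:local}, the hypothesis that $\E$ has no minimal cover means there is some vertex $v\in\bigcup\E$ at which $\E$ \emph{fails} to have minimal covers; that is, there exist a finite set $B\subseteq\bigcup\E$ and a subset $A\subseteq\bigcup\E_{v-B}$ such that the hypergraph $\E_{v-B}{\restriction}A\setminus\bigcup\E_{-(B\cup\{v\})}$ has no minimal cover. The idea is to use such a ``witness of failure'' to extract a single infinite edge $E_\infty$ together with an increasing chain of its finite subsets that are themselves (traces of) edges; after that, an isomorphism onto $\w$ falls out by enumeration.

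First I would set up an induction that repeatedly applies Lemma~\ref{l:local} together with Lemma~\ref{l:delete}. At stage $n$ I will have a hypergraph $\E_n$ with no minimal cover, a vertex $v_n$ at which it fails to have minimal covers, and finite sets of ``forbidden'' and ``required'' vertices; passing to $\E_{v_n-B}{\restriction}A\setminus\bigcup\E_{-(B\cup\{v_n\})}$ (which has no minimal cover by the failure at $v_n$), and then invoking Lemma~\ref{l:local} again on this countable hypergraph, I obtain the next vertex $v_{n+1}$ at which \emph{it} fails, and so on. The key bookkeeping is to arrange that $v_0,v_1,v_2,\dots$ are distinct, that for each $n$ there is an edge $E$ of the original $\E$ with $\{v_0,\dots,v_n\}\subseteq E$ (this is the role of the subscript $A$ in the $\E_A$ notation: each localization step forces the newly chosen vertex into all surviving edges), and that suitable finite sets get excluded so that these edges, restricted to $\{v_k:k\in\w\}$, realize exactly the initial segments $\{v_0,\dots,v_{n-1}\}$. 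Then $W:=\{v_k:k\in\w\}$ is countably infinite, and the family of edge-traces $\{E\cap W:E\in\E\}$ contains, for each $n$, the set $\{v_0,\dots,v_{n-1}\}$; restricting further to the subfamily of these initial segments gives a hypergraph on $W$ whose edges are precisely $\{v_0,\dots,v_{n-1}\}$ for $n\in\w$, which is literally a subhypergraph of $\E$ (it has the form $\mathcal F{\restriction}A$) and is isomorphic to $\w$ via $v_k\mapsto k$.

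I expect the main obstacle to be the precise choreography of the $B$'s and $A$'s across stages so that three things hold simultaneously: (a) each localized hypergraph $\E_{n+1}$ genuinely has no minimal cover — this is guaranteed by the definition of ``fails to have minimal covers at $v_n$'' applied to the correct finite $B$ and the correct $A$, but one must check the $B$ that works is finite and that $A$ can be chosen as a subset of the right vertex set; (b) the chosen vertices $v_n$ are pairwise distinct and each lies in $\bigcup\E_{v_{n-1}-B_{<n}}$ type sets, so that a common edge through $\{v_0,\dots,v_n\}$ exists; and (c) the exclusion sets $\bigcup\E_{-(B\cup\{v_n\})}$ remove from the traces everything except the desired initial segments, so that no ``extra'' edge of $\w$-incompatible shape survives on $W$. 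Controlling (c) is the delicate point, since one is simultaneously deleting vertices (via ${\restriction}A$ and Lemma~\ref{l:delete}) and tracking which edges of the original $\E$ project onto which subsets of $W$; a clean way to handle it is to record, at each stage, an explicit edge $\tilde E_n\in\E$ with $\{v_0,\dots,v_n\}\subseteq\tilde E_n$ and $\tilde E_n\cap\{v_{n+1},v_{n+2},\dots\}=\emptyset$, which is exactly what the localization with the finite forbidden set $B$ furnishes. Once that sequence $(\tilde E_n)$ is in hand, the subhypergraph $\{\tilde E_n:n\in\w\}{\restriction}W$ is isomorphic to $\w$, finishing the proof.
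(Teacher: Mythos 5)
Your plan is correct and follows essentially the same route as the paper: locate a vertex where Lemma~\ref{l:local} fails, iterate the localization together with Lemma~\ref{l:delete} to produce distinct vertices $v_0,v_1,\dots$ and edges $\tilde E_n\in\E$ with $\{v_0,\dots,v_n\}\subseteq\tilde E_n$ and $\tilde E_n\cap\{v_m:m>n\}=\emptyset$, and read off the copy of $\w$ --- though note that this last disjointness is furnished by deleting the chosen edge from the ambient vertex set via Lemma~\ref{l:delete}, not by the finite forbidden set $B$ as your closing sentence suggests. The only other slip is an off-by-one at the end: the traces $\tilde E_n\cap W=\{v_0,\dots,v_n\}$ omit the empty edge $0=\emptyset$ of the hypergraph $\w$, so you should restrict to $\{v_1,v_2,\dots\}$ rather than to all of $W$, exactly as the paper does with its set $\Omega$ and the map $n\mapsto v_{n+1}$.
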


\begin{proof} 
Applying Lemma~\ref{l:local}, we can find a vertex $v_0\in V$ at which the hypergraph $\E$ has no minimal covers. This means that for some finite set $B_0\subseteq V$ and some set $A_0\subseteq V$ the hypergraph $\E_{v_0-B_0}{\restriction}A_0\setminus\bigcup\E_{-(B_0\cup\{v_0\})}$ has no minimal covers.
This implies that the hypergraph $\bigcup\E_{v_0-B_0}$ is not empty, so there exists an edge $E_0\in \E_{v_0-B_0}$. Finally, put $V_0:=(\bigcup\E_{v_0-B_0})\cap A_0\setminus(\{E_0\}\cup\bigcup\E_{-(B_0\cup\{v_0\})})$.
By Lemma~\ref{l:delete}, the hypergraph $\E_{v_0-B_0}{\restriction}V_0$ has no minimal covers. 

Proceeding by induction, for every $n\in\w$ we shall choose subsets $V_n,A_n\subseteq V$,  a finite subset $B_n\subseteq V$, a point $v_n\in V$, and an edge $E_n\in\E$ such that the following conditions are satisfied:
\begin{enumerate}
\item[$(1_n)$] $v_n\in V_{n-1}$;
\item[$(2_n)$] $B_n\cup A_n\subseteq V_{n-1}$;
\item[$(3_n)$] for the sets $v_{<n}:=\{v_k\}_{k<n}$, $v_{\le n}:=\{v_k\}_{k\le n}$ and $B_{\le n}:=\bigcup_{k\le n}B_k$, the hypergraph $\E_{v_{\le n}-B_{\le n}}{\restriction}A_n\setminus\bigcup\E_{v_{<n}-(B_{\le n}\cup\{v_n\})}$ has no minimal covers;
\item[$(4_n)$] $v_n\in E_n\in \E_{v_{\le n}-B_{\le n}}$;
\item[$(5_n)$] $V_n:=(\bigcup\E_{v_{\le n}-B_{\le n}})\cap A_n\setminus (E_n\cup \bigcup\E_{v_{<n}-(B_{\le n}\cup\{v_n\})})$;
\item[$(6_n)$] the hypergraph $\E_{v_{\le n}-B_{\le n}}{\restriction}V_n$ has no minimal covers.
\end{enumerate}
Observe that for the number $n=0$ the inductive conditions $(1_n)$--$(6_n)$ are satisfied. Assume that for some number $n\in\IN$ and every $k<n$ we have chosen sets $V_k,A_k,B_k$, a point $v_k$ and an edge $E_k$ so that the inductive conditions 
$(1_k)$--$(6_k)$ are satisfied for all $k<n$.

By the condition $(6_{n-1})$ the  hypergraph $\E_{v_{<n}-B_{<n}}|V_{n-1}$ has no minimal covers. By Lemma~\ref{l:local}, this hypergraph does not have no minimal covers at some vertex $v_n\in V_{n-1}$. This means that for some set $A_n\subseteq V_{n-1}$ and some finite set $B_n\subseteq A_{n-1}$ the hypergraph 
$\E_{v_{\le n}-B_{\le n}}{\restriction}A_n\setminus \bigcup \E_{v_{<n}-(B_{\le n}\cup\{v_n\})}$ has no minimal covers. This implies that the family $\E_{v_{\le n}-B_{\le n}}$ is not empty, so we can find an edge $E_n\in \E_{v_{\le n}-B_{\le n}}$. By Lemma~\ref{l:delete}, for the set $V_n:=(\bigcup\E_{v_{\le n}-B_{\le n}})\cap A_n\setminus(\{E_n\}\cup\bigcup\E_{v_{<n}-(B_{\le n}\cup\{v_n\})})$ the hypergraph $\E_{v_{\le n}-B_{\le n}}|V_n$ has no minimal covers.
Now we see that the conditions $(1_n)$--$(6_n)$ are satisfied.
This completes the inductive step.

After completing the inductive construction, observe that the conditions $(1_n),(4_n),(5_n)$ ensure that the sequence $(v_n)_{n\in\w}$ consists of pairwise distinct points. Moreover, for the bijective map $f:\w\to \Omega:=\{v_n\}_{n\in\IN}$, $f:n\mapsto v_{n+1}$, we have $f^{-1}(E_n)=\{0,\dots,n-1\}$ for all $n\in\w$, which means that the subhypergraph $\{E_n\}_{n\in\w}{\restriction}\Omega$ of $\E$ is isomorphic to the graph $\w$.
\end{proof}

\section{On minimal covers in  hypergraphs}\label{s:a}

In this section we discuss the problem of existence of minimal covers in arbitrary (not necessarily countable) hypergraphs.
First, let us analyze some implications of Theorem~\ref{t:main}.

\begin{corollary}\label{c:fin} A countable hypergraph $\E$ has a minimal cover if 
 any infinite set $I\subseteq \bigcup\E$ contains a finite subset $F\subseteq I$ such that the family $\E_F:=\{E\in\E:F\subseteq E\}$ is finite.
\end{corollary}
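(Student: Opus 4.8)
The plan is to deduce the corollary from Theorem~\ref{t:main}. Under the stated hypothesis on the countable hypergraph $\E$, I will verify condition (3) of the theorem, namely that no subhypergraph of $\E$ is isomorphic to $\w$; then condition (1), applied to the countable subhypergraph $\E=\E{\restriction}\bigcup\E$ of itself, yields a minimal cover of $\E$. (Equivalently, one may argue by contradiction and invoke Lemma~\ref{l3} directly: if $\E$ had no minimal cover, then $\E$ would contain a subhypergraph isomorphic to $\w$, and a contradiction with the hypothesis is derived from it.)

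So suppose $\mathcal S$ is a subhypergraph of $\E$ isomorphic to $\w$. By definition $\mathcal S=\F{\restriction}A$ for some $\F\subseteq\E$ and $A\subseteq\bigcup\F$, and there is a bijection $f:\w\to\bigcup\mathcal S$ with $\mathcal S=\{f[n]:n\in\w\}$, where each finite ordinal $n$ is identified with the set $\{0,\dots,n-1\}$. Put $S_n:=f[\{0,\dots,n-1\}]$, so that $|S_n|=n$, the sets $S_n$ are pairwise distinct and form an increasing chain with $\bigcup_{n\in\w}S_n=f[\w]=\bigcup\mathcal S$. For every $n\in\w$ fix an edge $E_n\in\F\subseteq\E$ with $S_n=E_n\cap A$. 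Since the sets $S_n=E_n\cap A$ are pairwise distinct, the edges $E_n$, $n\in\w$, are pairwise distinct.

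Now set $I:=\bigcup\mathcal S=f[\w]$, which is an infinite subset of $\bigcup\E$ because $f$ is a bijection of $\w$ onto it. Let $F\subseteq I$ be any finite subset and choose $m\in\w$ large enough that $F\subseteq f[\{0,\dots,m-1\}]=S_m$. Then for every $n\ge m$ we have $F\subseteq S_m\subseteq S_n=E_n\cap A\subseteq E_n$, hence $E_n\in\E_F$. Thus $\E_F\supseteq\{E_n:n\ge m\}$ is infinite. This shows that $I$ is an infinite subset of $\bigcup\E$ every finite subset of which $F$ satisfies: $\E_F$ is infinite, contradicting the hypothesis on $\E$.

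Therefore $\E$ has no subhypergraph isomorphic to $\w$, and the implication $(3)\Ra(1)$ of Theorem~\ref{t:main} gives a minimal cover of $\E$. The only step requiring care is the bookkeeping when passing from the edges of $\w$ (finite ordinals viewed as sets) to the edges $S_n$ of the subhypergraph and then lifting them to genuine edges $E_n\in\E$; once one observes that the $E_n$ are pairwise distinct, the remaining estimate is immediate, so I do not anticipate a real obstacle.
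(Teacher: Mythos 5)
Your proposal is correct and follows essentially the same route as the paper: apply Theorem~\ref{t:main} (equivalently Lemma~\ref{l3}) to extract a subhypergraph isomorphic to $\w$ from a hypothetical counterexample, take $I$ to be its vertex set, and observe that every finite $F\subseteq I$ lies in cofinally many of the increasing edges, so $\E_F$ is infinite. Your version merely spells out the bookkeeping (distinctness of the lifted edges $E_n$) that the paper leaves implicit.
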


\begin{proof} Assuming that $\E$ has no mimimal subcovers and applying Theorem~\ref{t:main}, we can find subsets $\mathcal I\subseteq\E$ and $I\subseteq\bigcup\mathcal I$ such that the subhypergraph $\mathcal I{\restriction}I$ is isomorphic to the hypergraph $\w$.  Then for any finite subset $F$ of the infinite set $I$ the family $\mathcal I_F$ is infinite and so is the family $\E_F\supseteq\mathcal I_F$.
\end{proof}

\begin{corollary}\label{c:nm} A countable hypergraph $\E$ has a minimal cover if there exist  numbers $n,m\in\IN$ such that for any $n$-element subfamily $\F\subseteq\E$ the intersection $\bigcap\F$ has cardinality $|\bigcap\F|<m$.
\end{corollary}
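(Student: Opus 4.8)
The plan is to deduce Corollary~\ref{c:nm} from Corollary~\ref{c:fin} (equivalently, from Theorem~\ref{t:main}) by verifying its hypothesis. So suppose $n,m\in\IN$ are such that every $n$-element subfamily $\F\subseteq\E$ has $|\bigcap\F|<m$. I want to show that every infinite set $I\subseteq\bigcup\E$ contains a finite subset $F\subseteq I$ with $\E_F$ finite, and then invoke Corollary~\ref{c:fin}.

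First I would observe that it suffices to bound, for a suitable finite $F\subseteq I$, the number of edges of $\E$ containing $F$. The natural choice is to pick $F$ of size $m$; then any edge $E\in\E_F$ satisfies $|E|\ge |F|=m$. The key point is the interaction with the hypothesis: if $\E_F$ were infinite, I could select $n$ distinct edges $E_1,\dots,E_n\in\E_F$, and then $F\subseteq\bigcap_{i=1}^n E_i=\bigcap\F$, so $|\bigcap\F|\ge m$, contradicting the assumption. Hence $\E_F$ must be finite (indeed $|\E_F|<n$) — provided $I$ actually contains a subset of size $m$, which holds since $I$ is infinite. So every infinite $I\subseteq\bigcup\E$ contains a finite $F$ (of cardinality $m$) with $\E_F$ finite, and Corollary~\ref{c:fin} yields a minimal cover.

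One routine subtlety to address is the degenerate case $m=1$: then $F\subseteq I$ can be taken to be any singleton $\{v\}$, and the argument still applies since $n$ edges through $v$ would force $|\bigcap\F|\ge 1=m$, again a contradiction; so $\E_v=\E_F$ is finite. (Alternatively one notes $m\ge 1$ always, and if $m=1$ the hypothesis already says every vertex lies in fewer than $n$ edges, so one may even appeal directly to the last assertion of the abstract.) No real obstacle is expected here — the whole content is the one-line pigeonhole observation that an infinite family of edges all containing an $m$-set has intersection of size at least $m$; the only care needed is choosing the size of $F$ correctly (namely $m$, not $n$) so that the contradiction with $|\bigcap\F|<m$ is triggered.
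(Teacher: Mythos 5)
Your argument is correct and is essentially identical to the paper's proof: both choose an $m$-element subset $F$ of the given infinite set $I$ and note that $|\E_F|\ge n$ would produce an $n$-element subfamily whose intersection contains $F$ and hence has cardinality at least $m$, contradicting the hypothesis, after which Corollary~\ref{c:fin} applies. The pigeonhole observation and the choice of $|F|=m$ (rather than $n$) are exactly the paper's steps, so there is nothing to add.
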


\begin{proof} Assume that for any $n$-element subfamily $\F\subseteq\E$ the intersection $\bigcap\F$ has cardinality $|\bigcap\F|<m$. Given any infinite set $I\subseteq \bigcup E$, choose any $m$-element set $F\subseteq I$ and observe that the family $\E_F$ has cardinality $|\E_F|<n$ (otherwise $|F|\le |\bigcap\E_F|<m$). By Corollary~\ref{c:fin}, the hypergraph $\E$ has a minimal cover.
\end{proof}

\begin{remark} For $m=2$ Corollary~\ref{c:nm} was essentially proved by the Mathoverflow user @bof in his answer to the problem \cite{MO1} posed by the first author.
\end{remark}

We do not know if Corollary~\ref{c:nm} can be generalized to arbitrary (not necessarily countable) hypergraphs. However this can be done if $n$ or $m$ is equal to 1.

\begin{proposition}\label{p:2} A hypergraph $\E$ has a minimal cover if of if the following conditions holds:
\begin{enumerate}
\item for any $v\in\bigcup\E$ the family $\E_v:=\{E\in\E:v\in E\}$ is finite;
\item $\sup\{|E|:E\in\E\}<\w$.
\end{enumerate}
\end{proposition}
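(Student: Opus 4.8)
The plan is to prove each of the two sufficient conditions separately, reducing in both cases to the countable characterization already established in Theorem~\ref{t:main}. The unifying idea is a reflection/compactness argument: if a hypergraph $\E$ has \emph{no} minimal cover, I want to extract a countable subhypergraph witnessing this, i.e. an isomorphic copy of $\w$ (or just a countable subhypergraph with no minimal cover), and then show that the hypothesis — which is clearly inherited by subhypergraphs — contradicts Theorem~\ref{t:main} applied to that countable piece. So the first step, common to both parts, is: \emph{assume $\E$ has no minimal cover and produce a countable subhypergraph of $\E$ with no minimal cover.} This is where condition (1) or (2) must do work, because for an arbitrary uncountable hypergraph no such reflection need hold.

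For condition (1), suppose each $\E_v$ is finite. First I would observe that we may assume $\E$ has no maximal edge containing each vertex, and more usefully that the failure of minimality is captured locally: since $\E_v$ is finite for every $v$, given any subfamily $\C\subseteq\E$ that covers $\bigcup\E$, an edge $C\in\C$ is \emph{necessary} (cannot be removed) precisely when some $v\in C$ lies in no other edge of $\C$, and checking this only involves the finite set $\C_v\subseteq\E_v$. The natural approach is a direct construction of a minimal cover of $\E$ itself (no need to go through countability): well-order the vertices, and greedily/transfinitely build a subfamily by, at each vertex not yet covered, throwing in an edge through it; because each vertex lies in only finitely many edges, once a vertex is covered only finitely many edges are ``responsible'' for it, and a pruning argument removes redundant edges. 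Alternatively, and perhaps more cleanly, one can invoke Theorem~\ref{t:main}: if $\E$ contained a subhypergraph isomorphic to $\w$ with vertex set $\Omega=\{v_n\}$ and edges $E_n\restriction\Omega=\{v_0,\dots,v_{n-1}\}$, then the vertex $v_0$ would lie in $E_n\restriction\Omega$ for all $n\ge1$, forcing infinitely many distinct edges of $\E$ through (a preimage of) $v_0$, contradicting finiteness of $\E_{v_0}$ — but this only handles countable $\E$, so the reflection step from the previous paragraph is essential here.

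For condition (2), suppose $N:=\sup\{|E|:E\in\E\}<\w$, so every edge has at most $N$ vertices. Here subhypergraph-heredity is immediate. The quickest route is again via Theorem~\ref{t:main}: a copy of $\w$ inside $\E$ would contain edges $E_n$ with $|E_n\restriction\Omega|=n\to\infty$, so $|E_n|\ge n$ eventually exceeds $N$, a contradiction — but once more this argument as stated only rules out a \emph{countable} bad subhypergraph. So the real content is the reflection step: if an arbitrary $\E$ with bounded edge size has no minimal cover, find a countable subhypergraph of $\E$ with no minimal cover. I expect this to be the main obstacle of the whole proposition, and I would attack it by building a countable elementary-submodel-style chain: start with one edge, and whenever a finite portion of the hypergraph built so far ``could'' be pruned, witness the non-minimality by adjoining countably many further edges; because edges are finite (size $\le N$), only finitely much new data is needed at each stage, so after $\w$ steps one obtains a countable subhypergraph that still has no minimal cover, and Theorem~\ref{t:main} finishes it. The hard part is making precise the claim that ``no minimal cover of $\E$'' reflects down to a countable subhypergraph, i.e. that one cannot accidentally create a minimal cover by passing to a countable piece; the bound $N$ on edge sizes is exactly what controls this, since it limits how a vertex's ``coverage'' can be spoiled, and I would phrase the closure conditions on the countable chain so as to trap any potential minimal cover inside it.
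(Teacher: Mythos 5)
There is a genuine gap: your entire argument, for both conditions, rests on a ``reflection'' step --- \emph{if $\E$ has no minimal cover, then some countable subhypergraph of $\E$ has no minimal cover} --- which you never prove and which you yourself flag as ``the main obstacle'' and ``the hard part.'' Naming the obstacle is not the same as overcoming it. Note that if this reflection held under the hypothesis of Theorem~\ref{t:main}(1) alone, it would immediately settle the paper's open Problem~\ref{prob:nm} (and the cover-of-the-plane-by-lines problem), so it must use hypothesis (1) or (2) in an essential and nontrivial way; your sketch gives no workable mechanism for this. In particular, a minimal cover of an uncountable hypergraph is itself uncountable, so it cannot literally be ``trapped inside'' a countable piece, and minimality of a cover is not preserved in either direction when passing to a subhypergraph, so the elementary-submodel-style closure conditions you gesture at would have to be designed with some care --- none is supplied. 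Your fallback ``direct construction'' for condition (1) (transfinite greedy selection followed by ``a pruning argument'') is also not a proof: an edge added to cover a currently uncovered vertex may become redundant later, and an unbounded pruning process is exactly the kind of procedure that fails to converge on hypergraphs such as $\w$.

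The paper avoids reflection entirely and argues directly. For condition (1) it applies Zorn's Lemma to obtain a maximal subfamily $\M\subseteq\E$ with $|\M_v|<|\E_v|$ for all $v$ (the finiteness of each $\E_v$ is what makes unions of chains stay in this class), and shows that $\C:=\E\setminus\M$ is a minimal cover: it covers because $|\M_v|<|\E_v|$ for each $v$, and maximality of $\M$ forces every $E\in\C$ to own a private vertex. For condition (2) it inducts on $m=\sup\{|E|:E\in\E\}$: a maximal disjoint subfamily $\mathcal D\subseteq\E$ meets every other edge, so deleting $\bigcup\mathcal D$ strictly lowers the bound on edge sizes; one then lifts a minimal cover of the reduced hypergraph and adjoins the members of $\mathcal D$ not absorbed by the lift. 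Both arguments are short, work for arbitrary cardinalities, and make no use of Theorem~\ref{t:main}. To salvage your approach you would need to actually prove the reflection lemma under hypothesis (1) or (2); as it stands, the proposal reduces the proposition to an unproved claim that appears to be at least as hard as the proposition itself.
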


\begin{proof} 1. First assume that for any $v\in V:=\bigcup\E$ the family $\E_v:=\{E\in\E:v\in E\}$ is finite.

By Zorn's Lemma the hypergraph $\E$ contains a maximal subfamily $\M\subseteq \E$ such that $|\M_v|<|\E_v|$ for all $v\in V$. We claim that $\C:=\E\setminus \M$ is a minimal cover of $\E$. Indeed, for every $v\in V$ the inequality $|\M_v|<|\E_v|$ implies that $\bigcup \C=V$. By the maximality of $\M$, for every $E\in \C$ and the family $\M'=\M\cup\{E\}$, there exists a vertex $v\in V$ such that $E_v=M'_v$, which means that $v\notin \bigcup (\C\setminus\{E\})$ and $\C\setminus\{E\}$ is not a cover of $V$.
\smallskip

2. By induction for every $m\in\w$ we shall prove that any hypergraph $\E$ with $\sup\{|E|:E\in \E\}\le m$ has a minimal cover.

For $m=1$, the 
assertion is trivial. Assume that for some $m\ge 2$ we have proved that any 
hypergraph $\E$ such that $\sup\{|E|:E\in\E\}<m$ has a minimal cover.

Take any hypergraph $\E$ with $\sup\{|E|:E\in\E\}\le m$. Using Zorn's lemma, 
choose a maximal disjoint subfamily $\mathcal D\subseteq \E$. By the maximality of $\mathcal D$, each edge 
$E\in \mathcal E\setminus \mathcal D$ intersects the set $\bigcup \mathcal D$, which implies that the hypergraph $\E':=\{E\setminus\bigcup\mathcal D:E\in\E\}$ has $\sup\{|E'|:E'\in\E'\}<\sup\{|E|:E\in\E\}\le m$. By the inductive assumption, 
the hypergraph  $\E'$ has a minimal cover $\C\subseteq \E'$. For every edge $C\in \C$ find an edge $\tilde C\in \E$ such that 
$C=\tilde C\setminus\bigcup\mathcal D$. Let $\tilde{\C}:=\{\tilde C:C\in \C\}$ and $\tilde{\mathcal D}:=\{D\in\mathcal D:D\not\subseteq \bigcup\tilde{\C}\}$. It can be shown that $\tilde C\cup\tilde{\mathcal D}$ is a minimal cover of the hypergraph $\E$.
\end{proof}

\begin{problem}\label{prob:nm} Assume that for a hypergraph $\E$ there exist  numbers $n,m\in\IN$ such that for any $n$-element subfamily $\F\subseteq\E$ the intersection $\bigcap\F$ has cardinality $|\bigcap\F|<m$. Has $\E$ a minimal cover?
\end{problem}

The answer to this problem is not known even for $n=m=2$. The following special (but spectacular) case of Problem~\ref{prob:nm} is also open, see  \cite{MO1}.

\begin{problem}[Banakh, 2018] Let $\E$ be a cover of the real plane by lines. Has $\E$ a minimal subcover?
\end{problem}

Finally, let us consider an example of a hypergraph consisting of pairwise incomparable sets and having no minimal cover. This example was first presented  by P.D\"om\"ot\"or in his answer to a question of the second author on Mathoverflow \cite{MO2} (see also the discussion at \cite{MO3}).

\begin{example}[D\"om\"ot\"or, 2015] The cover $$\E:=\big\{[-n,0]\cup\{n\},\{-n\}\cup[0,n]:n\ge 2\big\}$$of $\mathbb Z$ contains no minimal subcovers and consists of pairwise incomparable finite sets.
\end{example}


\end{document}